\theoremstyle{plain} \newtheorem{theorem}{Theorem}[section]
\theoremstyle{plain} \newtheorem{corollary}[theorem]{Corollary}
\theoremstyle{plain} \newtheorem{proposition}[theorem]{Proposition}
\theoremstyle{plain}\newtheorem{lemma}[theorem]{Lemma}
\theoremstyle{definition} \newtheorem{definition}[theorem]{Definition}
\theoremstyle{definition}
\theoremstyle{remark}
\theoremstyle{definition}
\theoremstyle{remark}
\theoremstyle{definition}
\theoremstyle{definition}
\newcommand{\C}{{\mathbb{C}}}
\newcommand{\Z}{{\mathbb{Z}}}
\newcommand{\X}{\mathcal X}
\newcommand{\Ob}{\mathcal O}
\numberwithin{equation}{section}
\title[Automorphisms of compact complex manifolds]{A Note on the automorphism group of a compact complex manifold}
\author{Laurent Meersseman}
\date{\today}
\subjclass{32M05,
 32G05 
}
\address{Laurent Meersseman\\ Laboratoire Angevin de REcherhe en MAth\'ematiques\\CNRS, Universit\'{e} d'Angers, Universit\'e de Bretagne-Loire\\ F-49045 Angers Cedex, France\\ laurent.meersseman@univ-angers.fr}
\begin{document}
\begin{abstract}
In this note, we give explicit examples of compact complex $3$-folds which admit automorphisms that are isotopic to the identity through $C^\infty$-diffeomorphisms but not through biholomorphisms. These automorphisms play an important role in the construction of the Teichm\"uller stack of higher dimensional manifolds.  
\end{abstract}

\maketitle

\section{Introduction.}
\label{intro}
Let $X$ be a compact complex manifold and $M$ the underlying oriented $C^\infty$ manifold. The automorphism group $\text{Aut}(X)$ of $X$ is a complex Lie group whose Lie algebra is the Lie algebra of holomorphic vector fields \cite{BM}. We denote by $\text{Aut}^0(X)$ the connected component of the identity. Its elements are thus automorphisms $f$ such that there exists a $C^\infty$-isotopy 
\begin{equation}
\label{isotopy}
t\in [0,1]\longmapsto f_t\in \text{Aut}(X)\qquad \text{ with } f_0=Id\text{ and } f_1=f.
\end{equation}
Note that $\text{Aut}(X)$ has at most a countable number of connected components so the quotient $\text{Aut}(X)/\text{Aut}^0(X)$ is discrete.

Let $\text{Diff}(M)$ be the Fr\'echet Lie group of $C^\infty$-diffeomorphisms of $M$. It is tangent at the identity to the Lie algebra of $C^\infty$ vector fields. Let $\text{Diff}^0(M)$ be the connected component of the identity. Its elements are $C^\infty$-diffeomorphisms $f$ such that there exists a $C^\infty$-isotopy 
\begin{equation}
\label{isotopy2}
t\in [0,1]\longmapsto f_t\in \text{Diff}(M)\qquad \text{ with } f_0=Id\text{ and } f_1=f.
\end{equation}
Note that the discrete group $\text{Diff}(M)/\text{Diff}^0(M)$ is the well known mapping class group.
Define now
\begin{equation}
\label{Aut1def}
\text{Aut}^1(X):=\text{Aut}(X)\cap \text{Diff}^0(M).
\end{equation}
There are obvious inclusions of groups
\begin{equation}
\label{inclusions}
\text{Aut}^0(X)\subseteq \text{Aut}^1(X)\subseteq \text{Aut}(X)
\end{equation}
In many examples, the first two groups are the same but differ from the third one (think of a complex torus). The purpose of this note is to describe an explicit family\footnote{It is indexed by two integers satisfying $b\geq 3a$ and $a>3$.} of compact $3$-folds $\X_{a,b}$ such that 
\begin{equation}
\label{property}
\text{Aut}^0(\X_{a,b})\subsetneq \text{Aut}^1(\X_{a,b})= \text{Aut}(\X_{a,b})
\end{equation}
and 
\begin{equation}
\label{mcproperty}
\text{Aut}^1(\X_{a,b})/\text{Aut}^0(\X_{a,b})=\Z_a
\end{equation}
Hence $\text{Aut}^1(\X_{a,b})$ has $a$ connected components, and this number can be chosen arbitrarily large.

Our main motivation, which is detailed in Section \ref{motive}, comes from understanding the Teichm\"uller stack of $M$, that is the quotient stack of the set of complex operators on $M$ modulo the action of $\text{Diff}^0(M)$.

The construction of the manifolds as well as the computation of their automorphism groups are elementary. As often when looking at explicit examples, the crux of the matter was to find the idea that makes everything work. We asked several specialists but they did not know any such example. We tried several classical examples but it always failed. Finally, we came accross the good family when looking for deformations of Hopf surfaces over the projective line $\mathbb P^1$ in connection with a different problem. The manifolds $\mathcal X_{a,b}$ are such deformations with the following additional property. All the fibers are biholomorphic except for those that lie above $0$ and above an $a$-th root of unity. Every automorphism must preserve these special fibers so must project onto $\mathbb P^1$ as a rotation of angle $2\pi k/a$ for some $k$. This explains the $a$ connected components of the automorphism group. Finally, diffeomorphically there is no special fiber since $\mathcal X_{a,b}$ is just a bundle. It is then not difficult to check that all rotations are allowed for diffeomorphisms so every automorphism is in $\text{Diff}^0(M)$.

\section{Motivations}
\label{motive}

Thanks to Newlander-Nirenberg Theorem \cite{NN}, a structure of a complex manifold $X$ on $M$ is equivalent to an integrable complex operator $J$ on $M$, that is a $C^\infty$ bundle operator $J$ on the tangent bundle $TM$ such that 
\begin{equation}
\label{ico}
J^2=- Id\qquad\text{ and }\qquad [T^{0,1},T^{0,1}]\subset T^{0,1}
\end{equation}
where $T^{0,1}$ is the subbundle of the complexified tangent bundle $TM\otimes\mathbb C$ formed by the eigenvectors of $J$ with eigenvalue $-i$. 

It is easy to check that the complex manifolds $X_J:=(M,J)$ and $X_{J'}$ are biholomorphic if and only if there exists a diffeomorphism $f$ of $M$ whose differential $df$ commutes with $J$ and $J'$. This defines an action of $\text{Diff}(M)$. 

The Teichm\"uller space $\mathcal T(M)$ of $M$ is then defined as the quotient of the space $\mathcal I(M)$ of integrable complex operators on $M$ (inducing the orientation of $M$) by  $\text{Diff}^0(M)$. 

As such, this is a topological space whose quotient by the mapping class group is the moduli space of complex structures on $M$. If $M$ is a surface, then this is the classical Teichm\"uller space and a complex manifold in a natural way. In higher dimensions, this is usually not even locally an analytic space, cf. \cite[Examples 11.3, 11.6]{LMStacks}. 

Kodaira-Spencer and Kuranishi classical deformation theory (see \cite{KodairaBook} for a good introduction) provides each compact complex manifold $X$ with an analytic space, its Kuranishi space, which encodes all the small deformations of its complex structure. This is not however a local moduli space, but must be thought of as the best approximation in the analytic category of a local moduli space. In particular, the same complex structure may be encoded in an infinite number of points.

In other words, there is a surjective map from the Kuranishi space $K_J$ of $X_J$ onto a neighborhood of $J$ in $\mathcal T(M)$ but which is in general far from being bijective (cf. \cite{Catsurvey} and \cite{Catsurvey2} where several results where equality holds are discussed). Especially, $\text{Aut}^0(X_J)$ acts\footnote{This is not exactly an action, cf. \cite[\S 2.3]{LMStacks} for details.} on $K_J$ identifying equivalent complex structures \cite{MeerssemanENS}, \cite[\S 2.3]{LMStacks}. The point here is that
the dimension of $\text{Aut}^0(X_J)$ is only an upper semi-continuous function of $J$ so may jump. When this occurs, the previous action is non-trivial and the Teichm\"uller space is not a local analytic space around $J$.

We are not finished yet. The quotient of $K_J$ by $\text{Aut}^0(X_J)$ is still not a priori a neighborhood of $J$ in $\mathcal T(M)$. The natural projection map may still have non-trivial discrete fibers. This phenomenon is thoroughly studied in \cite{LMStacks}. It is shown that the action of $\text{Diff}^0(M)$ onto $\mathcal I(M)$ is a foliation transversely modelled onto the stack $[K_J/\text{Aut}^0(X_J)]$ and that a holonomy groupoid can be defined for such an object. The holonomy morphisms describe exactly the non-trivial fibers (see in particular Remarks 10.4 and 10.8).  Hence, holonomy measures the gap between the stacks $\mathcal T(M)$ (locally at $J$) and $[K_J/\text{Aut}^0(X_J)]$. In many cases however, there is no holonomy. This leads to the following problem\\
\vskip1pt
\noindent {\bf Holonomy Problem.} {\sl Give examples of Teichm\"uller spaces with non-trivial holonomy.}\\
\vskip2pt
This is the main motivation of this paper.
Note that the isotropy group of a point $J$ in $\mathcal I(M)$ is exactly $\text{Aut}^1(X_J)$ and not $\text{Aut}^0(X_J)$. So letting $(M,J)$ encoding one of the manifolds $\mathcal X_{a,b}$, our result says that this isotropy group is not connected. As a consequence, the Teichm\"uller space of $M$ has non-trivial finite holonomy at $J$.

\section{The manifolds $\mathcal X_{a,b}$.}
\label{Xab}

Let $a$ and $b$ be two nonnegative integers. Let $\lambda$ be a non-zero complex number of modulus strictly less than one. For further use, we define the following two surfaces. Let $X_0$ be the Hopf surface defined as $\C^2\setminus\{(0,0)\}$ divided by the group generated by the contraction $(z,w)\mapsto (\lambda z,\lambda w)$. Let $X_1$ be the Hopf 
surface defined as $\C^2\setminus\{(0,0)\}$ divided by the group generated by the contraction $(z,w)\mapsto (\lambda z+w,\lambda w)$. These two Hopf surfaces are not biholomorphic, cf. \cite{KodairaBook}.

We consider the vector bundle $\Ob(b)\oplus\Ob(a)\to\mathbb P^1$. Throughout the article, we make use of the charts
\begin{equation}
\label{bundlecharts}
(t,z_0,w_0)\in \C^3\qquad\text{ and }\qquad(s,z_1,w_1)\in \C^3
\end{equation}
subject to the relations
\begin{equation}
\label{bundlechange}
st=1,\qquad z_1=s^bz_0,\qquad w_1=s^{a}w_0.
\end{equation}
Let $c> 0$ and let $\sigma$ be a non-zero holomorphic section of $\Ob(c)$. In accordance with \eqref{bundlecharts} and \eqref{bundlechange}, we represent it in local charts by two holomorphic maps $\sigma_0$ and $\sigma_1$ satisfying $\sigma_1(s)=s^{c}\sigma_0(t)$.
Let $W$ be $\Ob(b)\oplus\Ob(a)$ minus the zero section. 
\begin{lemma}
\label{Hopfquotient}
Assume that $b-a-c\geq 0$. Then, the holomorphic maps
\begin{equation}
\label{actionHopf1}
(t,z_0,w_0)\mapsto g_0(t,z_0,w_0)=(t,\lambda z_0+\sigma_0(t)w_0,\lambda w_0)
\end{equation}
and
\begin{equation}
\label{actionHopf2}
(s,z_1,w_1)\mapsto g_1(s,z_1,w_1)=(s,\lambda z_1+s^{b-a-c}\sigma_1(s)w_1,\lambda w_1)
\end{equation}
defines a biholomorphism $g$ of $W$.
\end{lemma}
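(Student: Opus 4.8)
The plan is to verify three things in turn: that $g_0$ and $g_1$ are each well-defined holomorphic self-maps of their respective chart domains, that they agree on the overlap and hence glue to a single global holomorphic map $g$, and finally that $g$ is bijective with holomorphic inverse and restricts to $W$.

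First I would observe that in each chart the map acts trivially on the base coordinate ($t$, resp. $s$) and fiberwise by the linear map with upper-triangular matrix
\[ \begin{pmatrix} \lambda & \sigma_0(t) \\ 0 & \lambda \end{pmatrix}, \qquad \text{resp.} \qquad \begin{pmatrix} \lambda & s^{b-a-c}\sigma_1(s) \\ 0 & \lambda \end{pmatrix}. \]
Since $\sigma$ is a section of $\Ob(c)$, both $\sigma_0$ and $\sigma_1$ are polynomials (of degree at most $c$), so $g_0$ is holomorphic on its whole chart. For $g_1$ the only potential issue is the factor $s^{b-a-c}$, which is holomorphic precisely when $b-a-c\geq 0$; this is exactly where the hypothesis of the lemma is used. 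Thus both maps are holomorphic on their charts.

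The heart of the argument is the compatibility check on the overlap $\{t\neq 0\}=\{s\neq 0\}$. Using the transition relations $st=1$, $z_1=s^bz_0$, $w_1=s^aw_0$, I would compute the image of a point under $g_0$ in the first chart, transport it to the second chart, and verify that the result equals $g_1$. Concretely, writing $z_0=s^{-b}z_1$ and $w_0=s^{-a}w_1$, the transported fiber coordinate $z_1'=s^b(\lambda z_0+\sigma_0(t)w_0)$ becomes $\lambda z_1+s^{b-a}\sigma_0(t)w_1$, and substituting $\sigma_0(t)=s^{-c}\sigma_1(s)$ (from $\sigma_1(s)=s^c\sigma_0(t)$) turns this into $\lambda z_1+s^{b-a-c}\sigma_1(s)w_1$, which is the $z_1$-component of $g_1$; the $w$-component transforms trivially since $w_1'=s^a\lambda w_0=\lambda w_1$. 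Hence $g_0$ and $g_1$ glue to a global holomorphic map $g$ of the total space.

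Finally I would check that $g$ is a biholomorphism preserving $W$. In each chart the fiberwise matrix has determinant $\lambda^2\neq 0$, so it is an invertible upper-triangular matrix whose inverse is again upper-triangular and holomorphic (with $\lambda$ replaced by $\lambda^{-1}$ and the off-diagonal entry by $-\lambda^{-2}$ times the original). These inverses are fiberwise linear and are compatible across charts by the same computation run backwards, so they assemble into a global holomorphic inverse of $g$. Since each fiberwise map is a linear automorphism of $\C^2$ fixing the base point, it sends the zero vector to itself; therefore $g$ carries the zero section to the zero section and restricts to a biholomorphism of $W$. The only genuine obstacle is the overlap computation, and the role of the hypothesis $b-a-c\geq 0$ is simply to keep $g_1$ holomorphic there.
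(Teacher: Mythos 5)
Your proof is correct and its core — the overlap computation showing that $g_0$ transported through the transition relations $st=1$, $z_1=s^bz_0$, $w_1=s^aw_0$ and $\sigma_1(s)=s^c\sigma_0(t)$ yields exactly $g_1$ — is the same computation the paper gives. The extra verifications you supply (holomorphy of $s^{b-a-c}$ from the hypothesis, invertibility of the fiberwise matrix with determinant $\lambda^2$, preservation of the zero section) are routine points the paper leaves implicit.
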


\begin{proof}
Just compute in the other chart
\begin{equation*}
\begin{aligned}
g_1(s,z_1,w_1)=&(s,\lambda z_1+s^{b-a-c}\sigma_1(s)w_1,\lambda w_1)\\
=&(1/t,s^b(\lambda z_0+\sigma_0(t)w_0),s^a(\lambda w_0))\\
\end{aligned}
\end{equation*}
so $g_0$ and $g_1$ glue in accordance with \eqref{bundlechange}.
\end{proof}

Consider now the group $G=\langle g\rangle$. It acts freely and properly on $W$ and fixes each fiber of $W\to \mathbb P^1$. The quotient space $W/G$ is thus a complex manifold.
More precisely

\begin{proposition}
\label{Xabbundle}
The manifold $W/G$ is a deformation of Hopf surfaces over $\mathbb P^1$. Moreover the fiber over $t\in\mathbb P^1$ is biholomorphic to $X_0$ if $t$ is a zero of $\sigma$, otherwise it is biholomorphic to $X_1$.
\end{proposition}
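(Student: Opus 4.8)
The plan is to realize the induced map $\bar\pi\colon W/G\to\mathbb P^1$ as a proper holomorphic submersion whose fibers are Hopf surfaces. Since $g$ fixes every fiber of the bundle projection $\pi\colon W\to\mathbb P^1$, that projection is $G$-invariant and descends to a holomorphic map $\bar\pi$. As the quotient map $p\colon W\to W/G$ is a local biholomorphism (the action being free and proper, so that $W/G$ is a complex manifold) and $\pi=\bar\pi\circ p$ is a submersion, $\bar\pi$ is automatically a submersion. Thus only two things remain to check: the biholomorphism type of the fibers, and properness of $\bar\pi$.

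For the fibers I would simply restrict $g$ to one fiber. Over a point $t$ in the chart $(t,z_0,w_0)$ the fiber of $W$ is $\C^2\setminus\{0\}$ and $g$ acts by the linear contraction
\[
A_t=\begin{pmatrix}\lambda & \sigma_0(t)\\ 0 & \lambda\end{pmatrix},
\]
so the corresponding fiber of $W/G$ is $(\C^2\setminus\{0\})/\langle A_t\rangle$. If $\sigma_0(t)=0$ then $A_t=\lambda\,\mathrm{Id}$ and this quotient is exactly $X_0$. If $\sigma_0(t)\neq 0$, conjugating by $\mathrm{diag}(\sigma_0(t),1)$ carries $A_t$ to $\left(\begin{smallmatrix}\lambda & 1\\0&\lambda\end{smallmatrix}\right)$; since a linear automorphism of $\C^2$ intertwining the two contractions descends to a biholomorphism of the quotients, the fiber is $X_1$. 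By the gluing computed in Lemma~\ref{Hopfquotient}, the off-diagonal coefficient is a global section of $\Ob(b-a)$ that restricts to $\sigma_0$ on the chart $\C$; hence on that chart its zero locus is precisely the zero locus of $\sigma$, giving the stated dichotomy (the fiber over the remaining point $s=0$ is read off the same way from $g_1$).

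The hard part will be properness of $\bar\pi$, equivalently compactness of $\bar\pi^{-1}(K)$ for compact $K\subset\mathbb P^1$. Since properness is local on the base, I would work over a compact set $K$ inside one chart, where $W$ trivializes as $K\times(\C^2\setminus\{0\})$ with $G$ acting fiberwise by $A_t$, and try to produce a compact "annular shell" $K\times\{c_1\le\|v\|\le c_2\}$ meeting every $G$-orbit, which then surjects onto $\bar\pi^{-1}(K)$ under $p$. The subtlety is that $A_t$ is not normal, so the operator norm is not directly usable; instead I would write $A_t=\lambda(\mathrm{Id}+N_t)$ with $N_t$ nilpotent and bounded on $K$, so that $A_t^{\,n}=\lambda^n(\mathrm{Id}+nN_t)$ yields the uniform estimates $\|A_t^{\,n}v\|\to 0$ as $n\to+\infty$ and $\|A_t^{\,n}v\|\to\infty$ as $n\to-\infty$, together with uniform two-sided bounds on $\|A_t^{\pm1}v\|/\|v\|$ over $t\in K$. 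Choosing the shell thick enough that $c_2/c_1$ exceeds the one-step distortion prevents any orbit from jumping over it, so every orbit meets the shell uniformly in $t$. With properness established, $\bar\pi$ is a proper holomorphic submersion all of whose fibers are Hopf surfaces, i.e. a deformation of Hopf surfaces over $\mathbb P^1$, and the fiber computation above finishes the proof.
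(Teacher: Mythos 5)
Your proof is correct and follows the same route as the paper's: descend the bundle projection to a proper holomorphic submersion onto $\mathbb P^1$ and identify each fiber as $\C^2\setminus\{(0,0)\}$ modulo the contraction $A_t$, with the dichotomy governed by the vanishing of the off-diagonal coefficient. The paper simply declares properness of the submersion ``obvious'' and cites Kodaira for the fact that the quotient by $A_t$ is $X_1$ when $\sigma_0(t)\neq 0$; your annular-shell estimate and explicit diagonal conjugation just supply those two omitted details.
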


In particular, $W/G$ is compact.

\begin{proof}
We already observed that the bundle map $W\to\mathbb P^1$ descends as a holomorphic map $\pi : W/G\to \mathbb P^1$. It is obviously a proper holomorphic
submersion, hence it defines $W/G$ as a deformation of complex manifolds parametrized by the projective line. The fiber over $t$ is $\C^2\setminus\{(0,0)\}$ divided by the contracting map $(z,w)\mapsto (\lambda z+\sigma(t)w,\lambda w)$. If $t$ is a zero of $\sigma$, then this is exactly the Hopf surface $X_0$. Otherwise, it is biholomorphic to $X_1$, see \cite{KodairaBook}.
\end{proof}

\begin{definition}
Assume that $c=2a$ and that $b\geq 3a$. We denote by $\X_{a,b}$ the manifold $W/G$ corresponding to the choice
\begin{equation}
\label{defsig}
\sigma_0(t)=t^a\prod_{k=0}^{a-1}(t-\exp (2i\pi k/a)).
\end{equation}
for $t\in\C$.
\end{definition}

For the rest of the paper, we assume that $a$ is strictly greater than $3$. Observe that the condition $b\geq 3a$ is nothing but $b-a-c\geq 0$.

\section{Computation of the automorphism groups.}
\label{autcomput}
We are in position to state and prove our main result.

\begin{theorem}
\label{main}
The manifold $\X_{a,b}$ satisfies
\begin{equation}
\label{Aut0}
 \text{\rm Aut}^0(\X_{a,b})\simeq
 \left\{ 
 \begin{pmatrix} 
 \alpha &P\\
 0 &\alpha
 \end{pmatrix} 
 \mid \alpha\in\C^*,\ P\in \C_{b-a}[X]
 \right \}/G
\end{equation}
and
\begin{equation}
\label{Aut1}
\begin{aligned}
\text{\rm Aut}(\X_{a,b})=&\text{\rm Aut}^1(\X_{a,b})\\
\simeq &\mathbb G_a\times \left\{ \begin{pmatrix} \alpha &P\\0 &\alpha\end{pmatrix} \mid \alpha\in\C^*,\ P\in \C_{b-a}[X]\right \}/G
\end{aligned}
\end{equation}
where $\mathbb G_a$ is the group of $a$-th roots of unity, $\C_{b-a}[X]$ is the space of complex polynomials with degree at most $b-a$ and where the product in {\rm \eqref{Aut1}} is given by
\begin{equation}
\label{productrule}
\left (r, \begin{pmatrix} \alpha &P\\0 &\alpha\end{pmatrix} \right )\cdot \left (r', \begin{pmatrix} \beta &Q\\0 &\beta\end{pmatrix} \right )=
\left (rr', \begin{pmatrix} \alpha\beta &\alpha Q\circ r+\beta P\circ r\\ 0 &\alpha\beta\end{pmatrix}\right )
\end{equation}
\end{theorem}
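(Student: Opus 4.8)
The strategy is to reduce the computation to linear algebra over $\mathbb P^1$ by showing that every automorphism is covered by a fiberwise-linear automorphism of the bundle $\Ob(b)\oplus\Ob(a)$. First I would show that the fibration $\pi\colon\X_{a,b}\to\mathbb P^1$ is canonical, so that every $F\in\text{Aut}(\X_{a,b})$ descends to a $\phi\in\text{Aut}(\mathbb P^1)$ with $\pi\circ F=\phi\circ\pi$. The generic fibre is the Hopf surface $X_1$, which carries a single elliptic curve, namely the image of $\{w=0\}$ (the invariant line of the Jordan contraction \eqref{actionHopf1}); dually, $X_1$ has algebraic dimension $0$. Hence the meromorphic functions on $\X_{a,b}$ are pulled back from $\mathbb P^1$, and $\pi$ is the algebraic reduction; equivalently, the surface $E:=\{w=0\}/G\subset\X_{a,b}$ sweeping out these elliptic curves is intrinsic. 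Both statements force $F(E)=E$ and the existence of $\phi$. I regard this as the main obstacle, since it is the only step requiring genuine complex-analytic input (the automorphism theory of Hopf surfaces); everything afterwards is bookkeeping.

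Next, using that $W$ is simply connected (it retracts onto an $S^3$-bundle over $S^2$) so that $W\to\X_{a,b}$ is the universal cover with deck group $G\cong\Z$, I would lift $F$ to a biholomorphism $\tilde F$ of $W$. Comparing eigenvalues of contractions shows $\tilde F g\tilde F^{-1}=g$, so $\tilde F$ is $G$-equivariant over $\phi$. Since $\tilde F$ preserves $\{w=0\}$ and restricts on each fibre to an isomorphism of Hopf surfaces, which is linear, $\tilde F$ is an upper-triangular bundle map $(t,z_0,w_0)\mapsto(\phi(t),A(t)z_0+P(t)w_0,D(t)w_0)$. Its restrictions to the sub-bundle $\{w=0\}\cong\Ob(b)$ and to the quotient $\Ob(a)$ are line-bundle isomorphisms covering $\phi$, so $A$ and $D$ are nowhere vanishing. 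The $G$-equivariance then reads
\begin{equation*}
A(t)\,\sigma_0(t)=D(t)\,\sigma_0(\phi(t)),
\end{equation*}
whence, passing to divisors, $\phi$ preserves $\operatorname{div}(\sigma)=a[0]+\sum_{\zeta^a=1}[\zeta]$ with multiplicities.

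A short computation then pins down $\phi$: a M\"obius transformation preserving this divisor must fix the unique multiplicity-$a$ point $0$ and permute the $a$-th roots of unity, and matching the $t^{a-1}$-coefficient after substitution forces $\phi$ to fix $\infty$ as well, so $\phi(t)=rt$ with $r\in\mathbb G_a$. Knowing $\phi$ fixes $0$ and $\infty$, holomorphicity of $\tilde F$ in the chart \eqref{bundlechange} at $s=0$ forces $A\equiv\alpha$ and $D\equiv\delta$ to be constants and $P\in\C_{b-a}[X]$; the invariance of $\sigma_0$ in \eqref{defsig} under $t\mapsto rt$ (this is exactly where $r^a=1$ is used) combined with the displayed identity yields $\alpha=\delta$. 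This produces precisely the matrices of \eqref{Aut0}--\eqref{Aut1}, defined up to the deck action of $G=\langle g\rangle$.

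Finally I would assemble the group. The subgroup with $\phi=\text{id}$ is the matrix set of \eqref{Aut0} modulo $G$, which is connected (the matrices form $\C^*\times\C^{b-a+1}$) and, since any continuous family of $r\in\mathbb G_a$ through $1$ is constant, equals $\text{Aut}^0(\X_{a,b})$, giving \eqref{Aut0}. Recording $r$ defines a surjection onto $\mathbb G_a$ with kernel $\text{Aut}^0$, and composing two such bundle maps yields the crossed product \eqref{productrule} (the precompositions $P\circ r$ arise because the fibre part is read in the rotated base coordinate). For the equality $\text{Aut}=\text{Aut}^1$ it remains to isotope each $F$ smoothly to the identity: the base rotation $t\mapsto rt$ is joined to $\text{id}$ by $t\mapsto e^{i\theta}t$, which lifts (using that $M$ is smoothly just a bundle, with no special fibre) to a path of diffeomorphisms of $M$ from $\text{id}$ to a lift of $F$, while the remaining fibre part lies in $\text{Aut}^0\subseteq\text{Diff}^0(M)$. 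Hence $F\in\text{Diff}^0(M)$, so $\text{Aut}(\X_{a,b})=\text{Aut}^1(\X_{a,b})$.
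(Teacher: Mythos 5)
Your treatment of the holomorphic part (everything up to the set-theoretic description of $\text{Aut}(\X_{a,b})$) is correct and runs parallel to the paper's Lemmas \ref{1}--\ref{3}, with two worthwhile variations: you extract the constraint on the base map $\phi$ from the $G$-equivariance identity $A(t)\sigma_0(t)=D(t)\sigma_0(\phi(t))$, which forces $\phi$ to preserve $\operatorname{div}(\sigma)$ \emph{with multiplicities} (hence $\phi(0)=0$ immediately), and you recover the equality of the diagonal entries from that same identity together with the $\mathbb G_a$-invariance of $\sigma_0$, rather than from Wehler's classification of $\text{Aut}(X_1)$ as the paper does. One step is too vague to check: your claim that ``matching the $t^{a-1}$-coefficient'' forces $\phi(\infty)=\infty$. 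The clean argument (the paper's) is that $\phi$ permutes at least three points of the unit circle, hence preserves the unit circle, hence fixes the inverse point $\infty$ of its fixed point $0$; this is where $a>3$ enters.

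The genuine gap is in the final step, $\text{Aut}(\X_{a,b})=\text{Aut}^1(\X_{a,b})$, which is the one delicate point of the theorem. You lift the base isotopy $t\mapsto e^{i\theta}t$ to a path of diffeomorphisms $H_\theta$ of $M$ and then assert that ``the remaining fibre part lies in $\text{Aut}^0$''. It does not: $H_{2\pi k/a}^{-1}\circ F$ is only a smooth fibre-preserving diffeomorphism covering $\text{id}_{\mathbb P^1}$, not a holomorphic automorphism, and the gauge group of a Hopf-surface bundle over $S^2$ is not obviously connected (the mapping class group of $S^1\times S^3$ is nontrivial), so being a gauge transformation does not by itself place it in $\text{Diff}^0(M)$. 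Note also that the naive equivariant lift $(t,z_0,w_0)\mapsto(e^{i\theta}t,z_0,w_0)$ does \emph{not} descend to $\X_{a,b}$ for $\theta\notin\frac{2\pi}{a}\Z$, because it fails to normalize $\langle g\rangle$ (indeed $\sigma_0(e^{i\theta}t)\neq\sigma_0(t)$), so there is a real obstruction to running the isotopy inside bundle maps of $\X_{a,b}$ itself. The paper's Lemma \ref{5} gets around this by deforming the complex structure: it places $\X_{a,b}$ in the family $W\times\C/\langle\tilde g\rangle\to\C$ whose fibre at $\epsilon=0$ is the locally trivial holomorphic bundle $B_{a,b}$, observes that the explicit formula for $F$ defines an automorphism of \emph{every} fibre of this family, transports $F$ to $B_{a,b}$ by an Ehresmann trivialization (a smooth isotopy), and only there --- where the full rotation group of $\mathbb P^1$ lifts holomorphically --- connects it to the identity. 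You need this, or some equivalent control of the leftover gauge transformation, to close the argument.
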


We note the immediate corollary

\begin{corollary}
\label{cor}
The group $\text{\rm Aut}^1(\X_{a,b})$ has $a> 3$ connected components and the quotient $\text{\rm Aut}^1(\X_{a,b})/ \text{\rm Aut}^0(\X_{a,b})$ is isomorphic to the cyclic group $\Z_a$.
\end{corollary}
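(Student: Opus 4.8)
The plan is to compute $\text{Aut}(\X_{a,b})$ directly by lifting automorphisms to the universal-type cover $W$ and exploiting the fibration structure from Proposition \ref{Xabbundle}. First I would argue that any automorphism $f$ of $\X_{a,b}$ descends to a biholomorphism $\bar f$ of the base $\mathbb P^1$: since $\pi\colon \X_{a,b}\to\mathbb P^1$ is the unique maximal fibration whose fibers are the Hopf surfaces (these fibers are the connected components of a canonically defined subset, or can be recovered from the Albanese/algebraic-dimension structure), $f$ must permute fibers and hence induce $\bar f\in\text{Aut}(\mathbb P^1)=\text{PGL}_2(\C)$. The key constraint comes from Proposition \ref{Xabbundle}: the fiber type is $X_1$ generically but $X_0$ exactly over the zeros of $\sigma$. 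Since $X_0\not\simeq X_1$, the map $\bar f$ must permute the zero locus of $\sigma_0$, which by \eqref{defsig} is $\{0\}\cup\{\exp(2i\pi k/a) : 0\le k\le a-1\}$, with $0$ a zero of multiplicity $a$ (a distinguished point) and the $a$-th roots of unity simple zeros. Thus $\bar f$ fixes $0$ and $\infty$ and permutes the $a$-th roots of unity, forcing $\bar f$ to be $t\mapsto rt$ for an $a$-th root of unity $r\in\mathbb G_a$. This produces the factor $\mathbb G_a$.

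Next I would determine the fiberwise part. Having fixed $\bar f(t)=rt$, I would lift $f$ to a map $\tilde f$ of $W$ covering $\bar f$ and commuting appropriately with the generator $g$ of $G$. Working in the chart $(t,z_0,w_0)$, $\tilde f$ must be a bundle map over $t\mapsto rt$, and the requirement that it conjugate the contraction $(z,w)\mapsto(\lambda z+\sigma(t)w,\lambda w)$ on the fiber over $t$ to the corresponding contraction over $rt$ pins down its linear form on each fiber. The holomorphic automorphisms of the Hopf surface $X_1$ (and the compatibility across fibers) force $\tilde f$ to act on $(z_0,w_0)$ by an upper-triangular matrix $\left(\begin{smallmatrix}\alpha & P\\ 0 & \alpha\end{smallmatrix}\right)$ with $\alpha\in\C^*$ and $P$ a section-valued entry; tracking the transition relations \eqref{bundlechange} and the degree bookkeeping $b\ge 3a$, $c=2a$ shows $P\in\C_{b-a}[X]$. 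Quotienting by $G=\langle g\rangle$ (which itself sits inside this matrix group) gives the stated form of $\text{Aut}(\X_{a,b})$, and composing two such lifts, while substituting $\bar f(t)=rt$ into the polynomial entries, yields the twisted product rule \eqref{productrule} with the $P\mapsto P\circ r$ precompositions. The identity component is obtained by setting $r=1$ (the root-of-unity factor being discrete), giving \eqref{Aut0}.

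The final and most delicate point is the equality $\text{Aut}(\X_{a,b})=\text{Aut}^1(\X_{a,b})$, i.e.\ that every automorphism is isotopic to the identity through \emph{diffeomorphisms}. Here I would abandon the complex structure and use that, $C^\infty$-diffeomorphically, $\X_{a,b}$ is simply a fiber bundle over $\mathbb P^1\cong S^2$ with fiber a Hopf surface (diffeomorphic to $S^1\times S^3$), since the distinction between $X_0$ and $X_1$ disappears smoothly. The automorphism $f$ projecting to the rotation $t\mapsto rt$ with $r=\exp(2i\pi/a)$ is covered by an ambient diffeomorphism that is a rotation of the base $S^2$; I would connect this rotation to the identity through rotations of $S^2$ and lift the resulting path of base diffeomorphisms to a $C^\infty$-isotopy of the bundle, using a connection or the local triviality of the smooth bundle. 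I expect \textbf{this smooth-isotopy step} to be the main obstacle: one must check that the chosen lift of the rotation isotopy can be arranged to start at the given automorphism $f$ and end at the identity while staying in $\text{Diff}(M)$ throughout, which requires care in matching the fiberwise linear data along the isotopy rather than merely moving the base. Since the generator of $\mathbb G_a$ is handled this way, all of $\text{Aut}(\X_{a,b})$ lands in $\text{Diff}^0(M)$, giving the asserted equality, and Corollary \ref{cor} then follows immediately by reading off the $a=|\mathbb G_a|$ connected components and the quotient $\mathbb Z_a$ from \eqref{Aut1} and \eqref{Aut0}.
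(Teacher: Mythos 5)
Your overall strategy is the same as the paper's (establish the full description of $\text{Aut}(\X_{a,b})$ via the fibration over $\mathbb P^1$, show every automorphism is smoothly isotopic to the identity, and read off the corollary), and the fiberwise computation, the twisted product rule and the identification of the identity component are essentially as in the paper. But two steps have genuine gaps. First, in pinning down the induced automorphism $\bar f$ of $\mathbb P^1$: you assert that $\bar f$ fixes $0$ because $0$ is a zero of $\sigma$ of multiplicity $a$, and that it fixes $\infty$, with no justification for the latter at all. The fiber over $0$ is biholomorphic to $X_0$, exactly like the fibers over the simple zeros, so the multiplicity of the zero is not visibly an invariant that $\bar f$ must preserve without a further argument (one would have to show that the order of degeneration of the family is preserved by fiber-respecting biholomorphisms); and $\infty$ is not a zero of $\sigma$ at all, so nothing you have said forces $\bar f(\infty)=\infty$. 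The paper argues differently: $\bar f$ permutes the $a+1$ points $\{0\}\cup\mathbb G_a$; at most one root of unity can be sent to $0$, so at least $a-1\geq 3$ points of the unit circle are sent to points of the unit circle; three points determine a circle, so $\bar f$ preserves the unit circle, hence fixes $0$, and the Schwarz lemma then forces a rotation. The hypothesis $a>3$ is essential here --- the paper's footnote exhibits a non-rotation preserving the set when $a=3$ --- so an argument of this kind cannot be skipped.

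Second, the smooth-isotopy step, which you yourself flag as the main obstacle, is left open: lifting the path of base rotations through a connection produces an isotopy from $f$ to \emph{some} bundle diffeomorphism covering the identity, but you must still connect that diffeomorphism to the identity inside $\text{Diff}(M)$, and you give no argument for this. The paper's device is to deform the complex structure rather than work purely smoothly: replacing $\sigma$ by $\epsilon\sigma$ gives a family $\X^\epsilon_{a,b}$ joining $\X_{a,b}$ to the holomorphic bundle $B_{a,b}$; the explicit form of the lifts shows $f$ is a holomorphic automorphism of every $\X^\epsilon_{a,b}$; conjugating by the Ehresmann trivialization yields a smooth isotopy from $f$ to an automorphism of $B_{a,b}$; and on $B_{a,b}$ every rotation of the base lifts, so that automorphism is connected to the identity through \emph{holomorphic} automorphisms. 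This reduces the delicate matching of fiberwise data along the isotopy to an explicit holomorphic path, and it is the idea your sketch is missing.
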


Theorem \ref{main} will be proved through a succession of Lemmas. 

\begin{lemma}
\label{1}
Let $f$ be an automorphism of $\X_{a,b}$. Then it respects $\pi$ and descends as an automorphism $h$ of $\mathbb P^1$. 
\end{lemma}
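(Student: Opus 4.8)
The plan is to recover the fibration $\pi$ intrinsically from the complex structure of $\X_{a,b}$ by exploiting the sharply different behavior of the two fiber types with respect to maps onto a curve. The key analytic input is classical Hopf-surface theory: the Hopf surface $X_1$ contains a single compact curve (the image of $\{w=0\}$), hence has algebraic dimension zero, so \emph{every holomorphic map $X_1\to\mathbb P^1$ is constant}; by contrast $X_0$ carries an elliptic fibration onto $\mathbb P^1$. Fix an automorphism $f$ of $\X_{a,b}$ and let $Z\subset\mathbb P^1$ be the finite zero set of $\sigma$, so that by Proposition \ref{Xabbundle} the fiber $F_p:=\pi^{-1}(p)$ is biholomorphic to $X_1$ for $p\notin Z$ and to $X_0$ for $p\in Z$.

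First I would show that $f$ permutes the $X_1$-fibers. For $p\notin Z$, consider the holomorphic map $\pi\circ f|_{F_p}\colon F_p\to\mathbb P^1$. Since $F_p\cong X_1$ admits no nonconstant map to $\mathbb P^1$, this map is constant, so $f(F_p)$ is contained in a single fiber $F_q$; as $f(F_p)$ and $F_q$ are irreducible compact complex surfaces of the same dimension, $f(F_p)=F_q$. Because $f$ is a biholomorphism, $F_q\cong F_p\cong X_1$, and since $X_0$ and $X_1$ are not biholomorphic, $q\notin Z$. Applying the same reasoning to $f^{-1}$, I obtain a bijection $h$ of $\mathbb P^1\setminus Z$ determined by $\pi\circ f=h\circ\pi$ over $\mathbb P^1\setminus Z$. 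This $h$ is holomorphic: near any $p\notin Z$ one has $h=\pi\circ f\circ\iota$ for a local holomorphic section $\iota$ of the submersion $\pi$.

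Next I would extend $h$ across the finitely many punctures. A holomorphic map from a punctured neighborhood into the compact Riemann surface $\mathbb P^1$ extends holomorphically over the puncture (removable singularity), so $h$ extends to a holomorphic map $\mathbb P^1\to\mathbb P^1$; running the same argument for $h^{-1}$ and comparing the extensions by continuity shows the two are mutually inverse, so $h\in\text{Aut}(\mathbb P^1)$. Finally $\pi\circ f$ and $h\circ\pi$ are holomorphic and agree on the dense open set $\pi^{-1}(\mathbb P^1\setminus Z)$, hence agree everywhere; in particular, for $p\in Z$ the fiber $F_p$ is mapped into, and by dimension onto, $F_{h(p)}$. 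Thus $f$ respects $\pi$ and descends to the automorphism $h$ of $\mathbb P^1$, which is exactly the claim.

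The hard part will be the analytic dimension statement for $X_1$, namely that every holomorphic map $X_1\to\mathbb P^1$ is constant. This is precisely the feature that the family is engineered to exploit, and it is where the asymmetry between $X_0$ (elliptically fibered) and $X_1$ (a single compact curve, $a(X_1)=0$) does all the work. Once this is granted, the remaining steps—identifying $f(F_p)$ with a fiber by a dimension count, the removable-singularity extension of $h$, and the density argument—are routine.
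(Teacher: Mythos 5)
Your proof takes essentially the same route as the paper: the key input in both is that $X_1$ has algebraic dimension zero, so $\pi\circ f$ is constant on each $X_1$-fiber, and the density of those fibers then forces $f$ to permute all $\pi$-fibers and descend to an automorphism $h$ of $\mathbb P^1$. The one inaccurate step is the blanket claim that a holomorphic map from a punctured neighborhood into $\mathbb P^1$ always extends over the puncture (it need not: $z\mapsto e^{1/z}$), but your extension of $h$ is still valid because near each $p\in Z$ the map $h$ takes values in $\mathbb P^1\setminus Z$ and hence omits $a+1\geq 5$ points (Great Picard), or alternatively because $h$ is injective, so the singularity cannot be essential.
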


\begin{proof}
Choose a fiber of $\X_{a,b}\to\mathbb P^1$ isomorphic to $X_1$. Restrict $f$ to it and compose with the projection onto the projective line. This gives a holomorphic map from $X_1$ to $\mathbb P^1$, hence a meromorphic function on $X_1$. But the algebraic dimension of $X_1$ is zero, see \cite{Dabrowski}, so this map is constant. In other words, $f$ sends the $\pi$-fibers isomorphic to $X_1$ onto the $\pi$-fibers. By density of these fibers, $f$ sends every $\pi$-fiber onto a $\pi$-fiber so descends  as an automorphism $h$ of $\mathbb P^1$. 
\end{proof}

\begin{lemma}
\label{2}
The automorphism $h$ is a power of the rotation at $0$ of angle $2\pi/a$.
\end{lemma}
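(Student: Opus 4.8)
The plan is to exploit the fact that the degeneration locus of the family is an intrinsic, $h$-invariant finite subset of $\mathbb P^1$, and to recognize this subset as the union of a regular polygon with its center, whose only M\"obius self-maps are rotations.

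First I would record the degeneration locus. By Lemma \ref{1} the automorphism $f$ satisfies $\pi\circ f=h\circ\pi$, so $f$ carries the fiber over $t$ biholomorphically onto the fiber over $h(t)$. By Proposition \ref{Xabbundle} the fiber is biholomorphic to $X_0$ exactly when $t$ is a zero of $\sigma$ and to $X_1$ otherwise; since $X_0$ and $X_1$ are not biholomorphic, $h$ must preserve the zero set $S$ of $\sigma$. From \eqref{defsig} the zeros of $\sigma_0$ are $0$ together with the $a$-th roots of unity $\exp(2i\pi k/a)$, and a short computation in the other chart using $\sigma_1(s)=s^{c}\sigma_0(1/s)$ with $c=2a$ shows $\sigma_1(0)\neq 0$, so $\infty\notin S$. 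Hence $S=\{0\}\cup\mu_a$, where $\mu_a$ denotes the group of $a$-th roots of unity, and $h(S)=S$.

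The heart of the argument is to show $h(0)=0$, i.e. that $h$ cannot move the center onto the polygon. Here I would use that $\mu_a$ lies on the unit circle $C$ while $0$ is its center, lying off $C$, and that a M\"obius transformation sends circles to circles. Whatever $h(0)$ is, at most one point of $\mu_a$ can be sent to $0$, so at least $a-1$ points of $\mu_a$ are sent into $\mu_a\subset C$. Since $a>3$ gives $a-1\geq 3$, the image circle is pinned down and $h(C)=C$; consequently $h$ permutes the two open disks bounded by $C$ and in particular sends points off $C$ to points off $C$. As $0\notin C$ this forces $h(0)\notin C$, so $h(0)$ cannot be a root of unity and therefore $h(0)=0$.

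Finally, knowing $h(0)=0$ and $h(C)=C$, the map $h$ fixes the center of the unit disk and preserves it, so by the classification of disk automorphisms fixing the center it is a rotation $t\mapsto\omega t$ with $|\omega|=1$. The condition $h(\mu_a)=\mu_a$ then gives $\omega=\omega\cdot 1\in\mu_a$, i.e. $\omega=\exp(2i\pi k/a)$ for some $k$, which is exactly the assertion. I expect the only delicate point to be the circle-preservation step, where the hypothesis $a>3$ is genuinely used to guarantee that three image points remain on $C$; this is precisely the role of packing $a$ roots of unity around the distinguished center $0$ in the choice \eqref{defsig}.
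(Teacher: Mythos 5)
Your proof is correct and follows essentially the same route as the paper: $h$ preserves the degeneration locus $\{0\}\cup\mu_a$, hence (using $a>3$) preserves the unit circle, hence fixes $0$, and the Schwarz Lemma forces a rotation lying in $\mu_a$. Your write-up merely makes explicit two points the paper leaves implicit, namely that $\sigma$ does not vanish at $\infty$ and the counting argument showing at least three points of $\mu_a$ remain on the circle.
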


\begin{proof}
Note that $f$ must send a $\pi$-fiber biholomorphic to $X_0$ onto a $\pi$-fiber biholomorphic to $X_0$. Now the set of such fibers is the set of $a$-th roots of unity plus zero by Proposition \ref{Xabbundle} and \eqref{defsig}. It follows from Lemma \ref{1} that the automorphism $h$ is an automorphism of the projective line which preserves this set. Since $a>3$, it must preserve at least three different points of the unit circle, hence must preserve the unit circle. But this implies that zero is fixed. Schwarz Lemma shows now that it is 
a power of the rotation\footnote{As pointed out to us by F. Bosio, this is no more true for $a=3$. Letting $j=\exp (2i\pi/3)$, the map $z\mapsto -(z-j)/(2jz+j^2)$ preserves the set but is not a rotation.} at $0$ of angle $2\pi/a$.
\end{proof}
 
 Lift $f$ as an automorphism $F$ of the universal covering $W$ of $\X_{a,b}$. We denote by $(F_0,F_1)$ its expression in the charts \eqref{bundlecharts}. 
 \begin{lemma}
\label{3}
In the charts {\rm \eqref{bundlecharts}}, the lifting $F$ has the following form
\begin{equation}
\label{f0}
F_0(t,z_0,w_0)=(r^kt, \alpha z_0+\tau_0(t)w_0,\alpha w_0)
\end{equation}
and
\begin{equation}
\label{f1}
F_1(s,z_1,w_1)=(r^{-k}s, r^{-kb}(\alpha z_1+\tau_1(s)w_1), \alpha w_1)
\end{equation}
where $r=\exp (2i\pi/a)$, $k$ is an integer, $\alpha$ a complex number and $\tau=(\tau_0,\tau_1)$ is a section of $\Ob(b-a)$.
\end{lemma}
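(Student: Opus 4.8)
The plan is to lift everything to the universal cover $W$ and exploit the interplay between the transition relations \eqref{bundlechange}, the commutation of $F$ with the deck group $G$, and the hypothesis $0<|\lambda|<1$. First I would record that, by Lemmas \ref{1} and \ref{2}, $f$ covers the rotation $h\colon t\mapsto r^kt$, so its lift $F$ maps chart $0$ to chart $0$ with first coordinate $r^kt$; writing $F_0=(r^kt,\Phi_0,\Psi_0)$, the components $\Phi_0,\Psi_0$ are holomorphic on $\C\times(\C^2\setminus\{0\})$ and hence, by Hartogs, extend holomorphically to all of $\C^3$. Each therefore admits a Taylor expansion in the fiber variables with coefficients holomorphic in $t$, and I decompose $\Phi_0=\sum_d\Phi_0^{(d)}$ and $\Psi_0=\sum_d\Psi_0^{(d)}$ into pieces homogeneous of degree $d$ in $(z_0,w_0)$.

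Since $F$ descends to the automorphism $f$ of $W/G$, it normalizes $G=\langle g\rangle\cong\Z$, so $FgF^{-1}=g^{\epsilon}$ with $\epsilon=\pm1$. Writing out this identity in chart $0$ using the explicit $g_0$ from Lemma \ref{Hopfquotient} and comparing fiber coordinates yields two functional equations relating $\Phi_0,\Psi_0$ evaluated at $(\lambda z_0+\sigma_0 w_0,\lambda w_0)$ to $\lambda^{\epsilon}\Phi_0$ and $\lambda^{\epsilon}\Psi_0$, with an extra degree-one term proportional to $\sigma_0(r^kt)$ in the $\Phi_0$-equation. The key observation is that the substitution $(z_0,w_0)\mapsto(\lambda z_0+\sigma_0 w_0,\lambda w_0)$ is $\lambda$ times a unipotent linear map, so on the degree-$d$ homogeneous piece the induced operator has all eigenvalues equal to $\lambda^{d}$; matching with $\lambda^{\epsilon}$ forces $\lambda^{d-\epsilon}=1$, hence $d=\epsilon$ because $0<|\lambda|<1$. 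This is the crux of the argument and the step I expect to be the main obstacle, as it is where both linearity and the correct sign are extracted at once: for $\epsilon=-1$ no value $d\ge0$ survives, forcing $\Psi_0\equiv0$, which contradicts the bijectivity of $F$; thus $\epsilon=+1$ and $\Phi_0,\Psi_0$ are linear in $(z_0,w_0)$.

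With linearity established, I would solve the remaining linear relations explicitly. The $\Psi_0$-equation, applied to $\Psi_0=b_{10}(t)z_0+\alpha(t)w_0$, forces $b_{10}\,\sigma_0\equiv0$ and hence $\Psi_0=\alpha(t)w_0$; the $\Phi_0$-equation, applied to $\Phi_0=A(t)z_0+B(t)w_0$, gives $A(t)\sigma_0(t)=\alpha(t)\,\sigma_0(r^kt)$. Since $\sigma_0(t)=t^a(t^a-1)$ by \eqref{defsig} and $r^{ka}=1$, we have $\sigma_0(r^kt)=\sigma_0(t)$, so this reduces to $A\equiv\alpha$. Finally I would impose holomorphicity across the other chart: passing to $(s,z_1,w_1)$ via \eqref{bundlechange} gives $\Psi_1=r^{-ka}\alpha(1/s)w_1$ and $\Phi_1=r^{-kb}\bigl(\alpha(1/s)z_1+s^{b-a}B(1/s)w_1\bigr)$, and regularity at $s=0$ forces $\alpha$ to be constant and $B$ a polynomial of degree at most $b-a$, that is, $B=\tau_0$ with $\tau=(\tau_0,\tau_1)$ a section of $\Ob(b-a)$ and $\tau_1(s)=s^{b-a}B(1/s)$. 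Using $r^{-ka}=1$ then delivers exactly the forms \eqref{f0} and \eqref{f1}.
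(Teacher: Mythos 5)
Your proof is correct, but it reaches the fiberwise form of $F_0$ by a genuinely different route than the paper. Where the paper simply cites Wehler \cite{We} for the automorphism groups of the Hopf surfaces $X_0$ and $X_1$ to assert that $F_0$ is fiberwise an upper-triangular linear map with equal diagonal entries, you rederive this from scratch: Hartogs extension across the zero section, decomposition into pieces homogeneous in $(z_0,w_0)$, and the eigenvalue comparison $\lambda^d=\lambda^{\epsilon}$ coming from the relation $FgF^{-1}=g^{\epsilon}$, which simultaneously kills all degrees $d\neq 1$ and rules out $\epsilon=-1$. This is in effect a self-contained reproof of the special case of Wehler's result that is needed, and it has the added virtue of treating the commutation with the deck transformation $g$ globally on $W$ rather than fiber by fiber; in particular it cleanly handles the fact that $f$ restricted to a fiber is an isomorphism between two \emph{different} fibers, and your identity $A\sigma_0=\alpha\,\sigma_0(r^kt)$ together with the $\mathbb G_a$-invariance $\sigma_0(r^kt)=\sigma_0(t)$ from \eqref{defsig} recovers exactly the computation the paper performs at the end of its proof when checking that the candidate maps descend. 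The final chart-gluing step via \eqref{bundlechange}, forcing $\alpha$ constant and $\tau$ a section of $\Ob(b-a)$, is the same in both arguments. The trade-off is length versus self-containedness: the paper's citation is shorter, while your argument avoids any appeal to the classification of Hopf surface automorphisms.
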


\begin{proof}
The first coordinate in \eqref{f0} comes from Lemma \ref{2}. For the two other coordinates, recall from \cite{We} that the automorphism group of $X_0$ is $\text{GL}_2(\C)$ (modulo quotient by the group generated by the contraction) and that of $X_1$ is the group of upper triangular matrices with both entries on the diagonal equal (modulo quotient by the group generated by the contraction).

Hence the general form of $F_0$ is 
\begin{equation}
\label{f0bis}
F_0(t,z_0,w_0)=(r^kt, \alpha_0(t) z_0+\tau_0(t)w_0,\alpha_0(t) w_0)
\end{equation}
for $\alpha_0$ and $\tau$ two holomorphic functions. But in the other chart, using the same more general form of \eqref{f1}, we must have
\begin{equation*}
\begin{aligned}
F_1(s,z_1,w_1)=&(r^{-k}s, r^{-kb}(\alpha_1(s) z_1+\tau_1(s)w_1),\alpha_1(s) w_1)\\
=&(1/(r^kt), (r^{-k}s)^b(z_0\alpha_1(1/t)+\tau_1(1/t)s^{a-b}w_0), s^a\alpha_1(1/t)w_0)
\end{aligned}
\end{equation*}
which extends at $s=0$ and glues with \eqref{f0bis} if and only if $\alpha=(\alpha_0,\alpha_1)$ is a constant and $\tau=(\tau_0,\tau_1)$ is a section of $\Ob(b-a)$.

It remains to check whether these automorphisms really descend as automorphisms of $\X_{a,b}$, i.e. whether they commute with the contraction $g$ of Lemma \ref{Hopfquotient}. We compute
\begin{equation*}
\begin{aligned}
g_0\circ F_0(t,z_0,w_0)&=(r^kt,\lambda\alpha z_0+\lambda \tau_0(t)w_0+\alpha \sigma_0(r^kt)w_0,\lambda\alpha w_0)\\
&=F_0\circ g_0(t,z_0,w_0)
\end{aligned}
\end{equation*}
since $\sigma$ is $\mathbb G_a$-invariant, cf. \eqref{defsig}. A similar computation holds in the $(s,z_1,w_1)$-coordinates, so finally all these automorphisms descend.
\end{proof}

\begin{lemma}
\label{4}
An automorphism of $\X_{a,b}$ is in the connected component of the identity if and only if it descends as the identity of $\mathbb P^1$.
\end{lemma}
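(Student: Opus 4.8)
The plan is to prove the two implications separately. For the ``only if'' direction I would exploit the discreteness of the group of induced rotations along an isotopy, and for the ``if'' direction I would connect the given automorphism to the identity through an explicit family built from Lemma \ref{3}.

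First I treat the forward implication. Suppose $f\in\text{Aut}^0(\X_{a,b})$, so that there is a $C^\infty$-isotopy $t\mapsto f_t\in\text{Aut}(\X_{a,b})$ with $f_0=\mathrm{Id}$ and $f_1=f$. By Lemma \ref{1} each $f_t$ descends to an automorphism $h_t$ of $\mathbb P^1$, and by Lemma \ref{2} this $h_t$ is a rotation $r^{k(t)}$ at $0$, with $r=\exp(2i\pi/a)$ and $k(t)\in\Z_a$. The assignment $t\mapsto h_t$ is continuous, hence so is $t\mapsto k(t)$ with values in the discrete group $\Z_a$; therefore $k$ is constant. Since $k(0)=0$, we conclude $h=h_1=\mathrm{Id}$, which is the claim.

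For the converse, suppose $f$ descends to the identity of $\mathbb P^1$, so that $k=0$ in Lemma \ref{3}. Then \eqref{f0} gives a lift of the form $F_0(t,z_0,w_0)=(t,\alpha z_0+\tau_0(t)w_0,\alpha w_0)$ with $\alpha\in\C^*$ and $\tau=(\tau_0,\tau_1)$ a section of $\Ob(b-a)$; moreover the commutation with $g$ verified in the proof of Lemma \ref{3} holds for every pair $(\alpha,\tau)$, so each such pair genuinely defines an automorphism of $\X_{a,b}$. I then connect $f$ to the identity explicitly: pick a path $u\mapsto\alpha_u$ in $\C^*$ from $\alpha$ to $1$ and set $\tau_{0,u}=(1-u)\tau_0$. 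The family $F^{(u)}_0(t,z_0,w_0)=(t,\alpha_u z_0+\tau_{0,u}(t)w_0,\alpha_u w_0)$ still commutes with $g$ for all $u$, hence descends to a continuous path $u\mapsto f_u$ in $\text{Aut}(\X_{a,b})$ with $f_0=f$ and $f_1=\mathrm{Id}$. Thus $f\in\text{Aut}^0(\X_{a,b})$. Put differently, the automorphisms descending to $\mathrm{Id}_{\mathbb P^1}$ are parametrized by the connected space $\C^*\times H^0(\mathbb P^1,\Ob(b-a))$ modulo $G$, and hence form a connected subgroup containing the identity.

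The only delicate point is the continuity claim used in the forward direction, namely that $t\mapsto h_t$ (equivalently $t\mapsto k(t)$) is continuous, so that taking values in the discrete group $\Z_a$ forces constancy. This is intuitively clear since $h_t$ is determined by the action of $f_t$ on the fibers of the submersion $\pi$, but it is the one step that should be spelled out carefully. Granting it, both implications follow at once, and the argument simultaneously pins down $\text{Aut}^0(\X_{a,b})$ as exactly the subgroup \eqref{Aut0} of automorphisms inducing the trivial rotation.
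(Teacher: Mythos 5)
Your proof is correct and follows essentially the same route as the paper: the forward direction uses that the induced rotation varies continuously along the isotopy and lies in the discrete set of powers of $r=\exp(2i\pi/a)$, hence is constant and equal to the identity; the converse contracts $\alpha$ to $1$ and $\tau$ to the zero section to produce an explicit path of automorphisms. Your version is somewhat more detailed (in particular in flagging the continuity of $t\mapsto h_t$, which the paper leaves implicit), but there is no difference in substance.
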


\begin{proof}
If an automorphism $f$ of $\X_{a,b}$ is in the connected component of the identity, then by Lemma \ref{2} its projection $h$ is isotopic to the identity through rotations of angle $2i\pi k/a$. This is only possible if $h$ is the identity. Conversely, if $h$ is the identity, it is easy to see that in Lemma \ref{3} we can move $\alpha$ to $1$ and $\tau$ to the zero section and obtain a path of automorphisms from $f$ to the identity.
\end{proof}

\begin{lemma}
\label{5}
Every automorphism of $\X_{a,b}$ is isotopic to the identity through $C^\infty$-diffeomorphisms.
\end{lemma}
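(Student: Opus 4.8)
The plan is to prove the statement by first reducing to a single model automorphism, and then producing a \emph{smooth} circle action on the underlying manifold that realises all rotations of the base $\mathbb P^1$, not merely the $\mathbb G_a$ ones permitted holomorphically.

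\medskip
\noindent\emph{Reduction.} By Lemmas \ref{1}, \ref{2} and \ref{3}, an arbitrary automorphism $f$ descends to a rotation $r^k$ of $\mathbb P^1$, and taking $\alpha=1$, $\tau=0$ in \eqref{f0} singles out the automorphism $f_k$ with lift $(t,z_0,w_0)\mapsto(r^kt,z_0,w_0)$; clearly $f_k=f_1^{\,k}$. The composite $f\circ f_k^{-1}$ descends to the identity of $\mathbb P^1$, so it lies in $\text{Aut}^0(\X_{a,b})$ by Lemma \ref{4}. Being the identity component of a Lie group, $\text{Aut}^0(\X_{a,b})$ is path-connected, hence contained in $\text{Diff}^0(M)$. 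Since $\text{Diff}^0(M)$ is a subgroup and $f=(f\circ f_k^{-1})\circ f_k$, we get $f\in\text{Diff}^0(M)$ as soon as $f_k=f_1^{\,k}\in\text{Diff}^0(M)$. Thus it suffices to show $f_1\in\text{Diff}^0(M)$.

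\medskip
\noindent\emph{A smooth circle action.} For $\theta\in\mathbb R$ let $\Phi_\theta$ be the biholomorphism of $W$ given in the charts \eqref{bundlecharts} by $(t,z_0,w_0)\mapsto(e^{i\theta}t,z_0,w_0)$ and $(s,z_1,w_1)\mapsto(e^{-i\theta}s,e^{-i\theta b}z_1,e^{-i\theta a}w_1)$; these glue through \eqref{bundlechange}, cover the rotation $t\mapsto e^{i\theta}t$ of $\mathbb P^1$, and satisfy $\Phi_{2\pi/a}=F$, the lift of $f_1$. Let $G_0=\langle g_0\rangle$ be generated by the constant contraction $g_0(t,z_0,w_0)=(t,\lambda z_0,\lambda w_0)$, and set $M_0=W/G_0$. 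One checks at once that $\Phi_\theta$ commutes with $g_0$ for every $\theta$, so the $\Phi_\theta$ descend to a holomorphic circle action $\bar\Phi_\theta$ on $M_0$ covering the rotations. I would then construct a fiber-preserving diffeomorphism $\Theta$ of $W$, commuting with $\Phi_{2\pi/a}$, with $\Theta\,g\,\Theta^{-1}=g_0$; it descends to a diffeomorphism $\bar\Theta:M\to M_0$ intertwining $f_1$ with $\bar\Phi_{2\pi/a}$. Setting $\psi_\theta:=\bar\Theta^{-1}\circ\bar\Phi_\theta\circ\bar\Theta$ for $\theta\in[0,2\pi/a]$ yields a smooth path in $\text{Diff}(M)$ with $\psi_0=\mathrm{id}$ and, by the equivariance, $\psi_{2\pi/a}=f_1$. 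This exhibits $f_1\in\text{Diff}^0(M)$ and finishes the proof.

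\medskip
\noindent\emph{The main point.} The crux is the construction of $\Theta$. Over the fiber at $t$ it must conjugate the linear Hopf contraction $L_{\sigma_0(t)}$ to $L_0=\lambda\,\mathrm{Id}$, where $L_\beta=\lambda\,\mathrm{Id}+\beta E$ and $E=\left(\begin{smallmatrix}0&1\\0&0\end{smallmatrix}\right)$. This is exactly the step that forces us out of the holomorphic category: wherever $\sigma_0(t)\neq0$ the source fiber is of type $X_1$ and the target of type $X_0$, and these are not biholomorphic; but both are diffeomorphic to $S^1\times S^3$, so a smooth fiberwise conjugation does exist. I would build it by the flow-box method: each $L_\beta$ is the time-one map of the contracting linear flow $\Xi^\beta_\tau=\lambda^\tau(\mathrm{Id}+\tau\beta\lambda^{-1}E)$, which depends smoothly on $\beta$ including $\beta=0$; choosing for each $\beta$ an adapted homogeneous radius with round cross-section, the associated identifications $\C^2\setminus\{0\}\cong S^3\times\mathbb R$ conjugate every $\Xi^\beta$ to the standard radial flow, and their composite is a diffeomorphism conjugating $L_{\sigma_0(t)}$ to $L_0$ that reduces to the identity when $\sigma_0(t)=0$. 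Because this recipe depends on $t$ only through $\sigma_0(t)$, and $\sigma_0$ is $\mathbb G_a$-invariant (as $r^a=1$), the resulting $\Theta$ automatically commutes with $\Phi_{2\pi/a}$; and since the $s$-chart coefficient $s^{b-a-c}\sigma_1(s)$ differs from $\sigma_0$ only by the linear transition \eqref{bundlechange}, the two chart constructions agree and $\Theta$ is global. The delicate points, where I expect to spend the most care, are the smoothness of the adapted radius in the parameter $\beta$ down to $\beta=0$ (so that $\Theta$ is genuinely $C^\infty$ across the locus of $X_0$-fibers) and the uniform control needed to patch over the point at infinity; everything else is formal.
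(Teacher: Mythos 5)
Your reduction to the single automorphism $f_1$ is correct and is a nice streamlining: since $f\circ f_k^{-1}$ projects to the identity of $\mathbb P^1$, Lemma \ref{4} places it in $\text{Aut}^0(\X_{a,b})\subset\text{Diff}^0(M)$, so everything indeed hinges on $f_1$. Your key idea --- that on the quotient $M_0=W/G_0$ by the \emph{constant} contraction the full circle of rotations acts holomorphically, so it suffices to transport $f_1$ there by a diffeomorphism $M\to M_0$ --- is exactly the idea underlying the paper's proof ($M_0$ is the bundle $B_{a,b}$). The difference lies in how that diffeomorphism is produced. The paper interpolates between $g$ and $g_0$ through the contractions $\tilde g_\epsilon:(t,z_0,w_0)\mapsto(t,\lambda z_0+\epsilon\sigma_0(t)w_0,\lambda w_0)$, observes that the quotients form a proper smooth family over $\C$, and invokes Ehresmann's lemma to obtain the trivializing isotopy $\phi_\epsilon$; conjugating $f$ (which is an automorphism of every fiber of this family, by Lemma \ref{3}) by $\phi_\epsilon$ then joins $f$ through diffeomorphisms to an automorphism of $B_{a,b}$, which is connected to the identity inside $\text{Aut}(B_{a,b})$. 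This soft argument delivers for free everything you propose to build by hand.

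The one genuine gap in your version is the global construction of $\Theta$. The fiberwise conjugation of $L_{\sigma_0(t)}$ to $L_0$ via adapted radius functions is plausible (it is the standard proof that these Hopf surfaces are all diffeomorphic to $S^1\times S^3$), and smoothness down to $\beta=0$ can be arranged with care. But the claim that ``the two chart constructions agree'' because the $s$-chart coefficient differs from $\sigma_0$ only by the transition \eqref{bundlechange} does not hold as stated: the transition acts on the fibers by $\mathrm{diag}(s^b,s^a)$, which is not unitary, so a conjugator built from round cross-sections in the $t$-chart is \emph{not} carried to the analogous conjugator built in the $s$-chart; the two recipes genuinely disagree on the overlap, and you would need either an interpolation there or a construction manifestly independent of the choice of linear coordinates on the fibers. (A similar but milder point concerns the required commutation with $\Phi_{2\pi/a}$, which in the $s$-chart acts on the fibers by $\mathrm{diag}(r^{-b},1)$ rather than by the identity; here unitarity saves you, but it needs to be checked.) Rather than repairing this, I would replace the explicit $\Theta$ by the paper's Ehresmann argument, keeping your reduction to $f_1$ if you wish.
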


\begin{proof}
Let $B_{a,b}$ be the bundle over $\mathbb P^1$ with fiber $X_0$ obtained by taking the quotient of $W$ by the group generated by the $\lambda$-homothety in the fibers. Observe that $\X_{a,b}$ can be deformed to $B_{a,b}$ putting a parameter $\epsilon\in\C$ and considering the family 
\begin{equation}
\label{family}
W\times \C/\langle \tilde g\rangle\longrightarrow \C
\end{equation}
where the action is given by (we just write it down in the first chart):
\begin{equation}
\label{actionHopfeps}
(t,z_0,w_0,\epsilon)\mapsto \tilde g_0(t,z_0,w_0,\epsilon)=(t,\lambda z_0+\epsilon\sigma_0(t)w_0,\lambda w_0,\epsilon)
\end{equation}
Hence $\X_{a,b}$ is $C^\infty$-diffeomorphic to $B_{a,b}$. More precisely, let $\X_{a,b}^\epsilon$ be the fiber of the family \eqref{family} over $\epsilon$. Then $\X_{a,b}^0=B_{a,b}$, and $\X_{a,b}^1=\X_{a,b}$ and by Ehresmann's Lemma there is an isotopy of $C^\infty$-diffeomorphisms $\phi_\epsilon$ from $\X_{a,b}^\epsilon$ onto $\X^1_{a,b}$ with $\phi_1$ equal to the identity. 

Let $f$ be an automorphism of $\X_{a,b}$. It is easy to check, using Lemma \ref{3}, that $f$ is still an automorphism of $B_{a,b}$ and of all the $\X_{a,b}^\epsilon$. We are saying that the map
\begin{equation}
\label{F}
F(t,z_0,w_0,\epsilon)=(F_\epsilon(t,z_0,w_0),\epsilon)=(f(t,z_0,w_0),\epsilon)
\end{equation}
is an automorphism of the whole family which induces $f$ on each fiber. The isotopy $\phi_\epsilon \circ F_\epsilon\circ (\phi_\epsilon)^{-1}$ joins the automorphism $f=F_1$ of $\X_{a,b}$ and $\phi_0 \circ F_0\circ (\phi_0)^{-1}$ through $C^\infty$-diffeomorphisms.

But now, at $\epsilon=0$, since $B_{a,b}$ is a holomorphic bundle, we may take any rotation at $0$ as map $h$ and thus construct a path of automorphisms $g_t$ of $B_{a,b}$ between $F_0$ and the identity. Combining the isotopy  $\phi_0 \circ g_t\circ (\phi_0)^{-1}$ with the previous one, we obtain that the automorphism $f$ of $\X_{a,b}$ is isotopic to the identity through $C^\infty$-diffeomorphisms.
\end{proof}

The proofs of Theorem \ref{main} and Corollary \ref{cor} follow easily from the previous lemmas.

\section{Final Comments.}
\label{fin}
The construction behaves well with respect to pull-backs of the base manifold $\mathbb P^1$. In particular, if $g$ is a ramified covering from $\mathbb P^1$ to $\mathbb P^1$, the pull-back manifold $\mathcal Y=g^*\mathcal X_{a,b}$ will have a more complicated finite group $\Gamma$ as quotient $\text{Aut}^1(\mathcal Y)/\text{Aut}^0(\mathcal Y)$. In this way, we may construct examples with $\Gamma$ being any cyclic or dihedral group, or with $\Gamma$ being the tetrahedral, octahedral or icosahedral group.\footnote{I owe this nice remark to David Marin.}

There are several open questions left on this topic. First the manifolds $\X_{a,b}$ are not even K\"ahler and it would be interesting to have a similar example with projective manifolds. Also, it would be interesting to have examples with $\text{Aut}^0(X)$ reduced to zero, but $\text{Aut}^1(X)$ not, especially examples of surfaces of general type, cf. \cite{Catsurvey}, \cite{Catsurvey2} and the subsequent literature. Finally, the most exciting would be to find an example with $\text{Aut}^1(X)/\text{Aut}^0(X)$ infinite since it would give a Teichm\"uller space with infinite holonomy at some point. As pointed out to us by S. Cantat, this cannot happen for K\"ahler manifolds, since the kernel of the action of $\text{Aut}(X)$ on the cohomology contains $\text{Aut}^0(X)$ as a subgroup of finite index \cite{Lieb}.  But in the non-K\"ahler world everything is possible.

For all these additional questions (except perhaps for the first one), it seems that a really different type of examples is needed.


\begin{thebibliography}{99}
\bibitem{BM} Bochner, S. and Montgomery, D.
\emph{Groups on analytic manifolds}. 
Ann. of Math. \textbf{48}, (1947), 659--669.

\bibitem{Catsurvey} Catanese, F.
\emph{A Superficial Working Guide to Deformations and Moduli}. Preprint
arXiv:1106.1368v3, to appear in the Handbook of Moduli, in honour of David Mumford, eds G. Farkas and I. Morrison, International Press.

\bibitem{Catsurvey2} Catanese, F.
\emph{Topological methods in moduli theory}. 
Bull. Math. Sci. 5, (2015). 287--449.

\bibitem{Dabrowski} Dabrowski, K.
\emph{Moduli Spaces for Hopf Surfaces}.
Math. Ann. 259, (1982). 201--225

\bibitem{KodairaBook} Kodaira, K.
\emph{Complex Manifolds and Deformations of Complex Structures}.
Springer, Berlin, 1986.

\bibitem{Lieb} Lieberman, D.I. 
\emph{Compactness of the Chow scheme: applications to automorphisms
and deformations of K\"ahler manifolds}. In Fonctions de plusieurs variables complexes, III
(S\'em. Fran\c cois Norguet, 1975-1977), pp.140--186. Springer, Berlin, 1978.

\bibitem{MeerssemanENS} Meersseman, L.
\emph{Foliated Structure of The Kuranishi Space and Isomorphism of Deformation Families of Compact Complex Manifolds}.
Ann. Sci. de l'Ecole Norm. Sup. 44, fasc. 3, (2011), 495--525.

\bibitem{LMStacks} Meersseman, L.
\emph{The Teichm\"uller and Riemann Moduli Stacks}. Preprint arXiv:1311.4170v3 (2015). 


\bibitem{NN} Newlander, A., Nirenberg, L.
\emph{Complex analytic coordinates in almost complex manifolds}.
Ann. of Math. \textbf{65}, (1957), 391--404.


\bibitem{We} Wehler, J.
\emph{Versal deformation of Hopf surfaces}.
J. Reine Angew. Math. \textbf{328}, (1981), 22--32.

\end{thebibliography}
\end{document}